\newtheorem {thm}{Theorem}
\newtheorem* {thm*}{Theorem}
\newtheorem* {cor*}{Corollary}
\newtheorem {lem}[thm]{Lemma}
\newtheorem {prop}[thm]{Proposition}
\theoremstyle{definition}
\newtheorem* {conj*}{Conjecture}
\newtheorem* {quest*}{Question}
\DeclareMathOperator{\End}{End}
\DeclareMathOperator{\Gal}{Gal}
\DeclareMathOperator{\Sp}{Sp}
\DeclareMathOperator{\GL}{GL}
\DeclareMathOperator{\Aut}{Aut}
\newcommand{\F}{\mathbb{F}}
\newcommand{\G}{\Gamma}
\newcommand{\Q}{\mathbb{Q}}
\renewcommand{\L}{\Lambda}
\newcommand{\g}{\gamma}
\newcommand{\p}{\mathfrak{p}}
\newcommand{\q}{\mathfrak{q}}
\newcommand{\GSp}{\mathrm{GSp}}
\newcommand{\CM}{\mathrm{CM}}
\newcommand{\Id}{\mathrm{Id}}
\author{Antonella Perucca}
\title{The prime divisors of the number of points on abelian varieties}
\begin{document}
\date{}

\maketitle


Let $A,A'$ be elliptic curves or abelian varieties fully of type $\GSp$ defined over a number field $K$. This includes principally polarized abelian varieties with geometric endomorphism ring $\mathbb Z$ and dimension $2$ or odd. We compare the number of points on the reductions of the two varieties. We prove that $A$ and $A'$ are $K$-isogenous if the following condition holds for a density-one set of primes $\p$ of $K$: the prime numbers dividing $\#A(k_\p)$ also divide $\#A'(k_\p)$. We generalize this statement to some extent for products of such varieties. This refines results of Hall and Perucca (2011) and of Ratazzi (2012).\\

\section{Introduction}

Let $A,A'$ be abelian varieties defined over a number field $K$. Let $S$ be a density-one set of primes of $K$ of good reduction for both $A$ and $A'$. A well-known result of Faltings of 1983~\cite[cor.~2]{Faltings83} implies that $A,A'$ are $K$-isogenous if and only if for every $\p\in S$ the following holds: the reductions of $A$ and $A'$ modulo $\p$ are isogenous over the residue field $k_\p$. For elliptic curves, this is equivalent to requiring that the number of points $\#A(k_\p)$ and $\#A'(k_\p)$ are equal. The aim of this paper is investigating analogous relations on the number of points that ensure that $A,A'$ are isogenous. 

In this paper we call an abelian variety \emph{admissible} if it is either an elliptic curve or an abelian variety fully of type $\GSp$. These are defined by considering the Galois action on the torsion points: a principally polarized abelian variety $A$ of dimension $g$ is said to be fully of type $\GSp$ if for all but finitely many prime numbers $\ell$ the image of the mod-$\ell$ representation of $A$ is $\GSp_{2g}(\mathbb F_\ell)$. This condition holds in particular if the geometric endomorphism ring is $\mathbb Z$ and the dimension is $2$ or odd.

We refine results by Hall and Perucca~\cite{HallPerucca} and by Ratazzi~\cite{Ratazzi}. We weaken the assumptions of respectively \cite[thm.]{HallPerucca}~and~\cite[thm.~1.6]{Ratazzi},  obtaining the following:

\begin{thm}\label{HPR}
Let $A,A'$ be admissible abelian varieties defined over a number field $K$. Let $S$ be a density-one set of primes of $K$ over which $A,A'$ have good reduction. If the condition 
$$\ell\mid\#A(k_\p)\quad \Rightarrow\quad \ell\mid\#A'(k_\p)$$
holds for infinitely many prime numbers $\ell$ and  for every $\p\in S$ then $A,A'$ are $K$-isogenous.
\end{thm}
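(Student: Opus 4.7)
The plan is to argue by contrapositive: assuming $A$ and $A'$ are not $K$-isogenous, I will show that the implication $\ell \mid \#A(k_\p) \Rightarrow \ell \mid \#A'(k_\p)$ fails at some $\p \in S$ for \emph{all but finitely many} primes $\ell$, contradicting the hypothesis that it holds for infinitely many such $\ell$. The crucial reformulation is that, whenever $\p$ has good reduction and is coprime to $\ell$, the condition $\ell \mid \#A(k_\p)$ is equivalent to $\Frob_\p$ fixing a nonzero vector in $A[\ell]$, i.e.\ to $1$ being an eigenvalue of $\Frob_\p$ on the mod-$\ell$ representation. The problem thus becomes a statement about the image of Galois on the joint torsion $(A \times A')[\ell]$.

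The first and main step is to describe this joint image. Because $A$ and $A'$ are fully of type $\GSp$, for all but finitely many $\ell$ the individual mod-$\ell$ images are $\GSp_{2g}(\F_\ell)$ and $\GSp_{2g'}(\F_\ell)$, with similitude characters both equal to the mod-$\ell$ cyclotomic character. Hence the joint image is contained in the fibered product $\GSp_{2g}(\F_\ell) \times_{\F_\ell^*} \GSp_{2g'}(\F_\ell)$, and I would prove that it equals this full fibered product for cofinitely many $\ell$. A Goursat-type argument forces any proper subgroup surjecting on both factors to produce a nontrivial common simple quotient of the two factors; by simplicity of $\mathrm{PSp}_{2g}(\F_\ell)$ and $\mathrm{PSp}_{2g'}(\F_\ell)$ for $\ell$ large, this yields $g = g'$ together with a Galois-equivariant isomorphism $A[\ell] \cong A'[\ell]$ up to twist. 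If this held for infinitely many $\ell$, a standard Faltings/Tate-module argument would produce a $K$-isogeny, contradicting the assumption.

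Given the full fibered product as image, for each sufficiently large $\ell$ I would exhibit a pair $(M_1, M_2)$ in $\GSp_{2g}(\F_\ell) \times_{\F_\ell^*} \GSp_{2g'}(\F_\ell)$ such that $1$ is an eigenvalue of $M_1$ but not of $M_2$. Fixing a common similitude $\lambda$, one can take $M_1$ block-diagonal with a hyperbolic plane having eigenvalues $\{1, \lambda\}$, while a dimension count shows that the condition ``$M_2$ has $1$ as an eigenvalue'' cuts out a proper subvariety of the similitude-$\lambda$ coset of $\GSp_{2g'}(\F_\ell)$, so elements avoiding it exist once $\ell$ is large. Chebotarev applied to the density-one set $S$ then yields $\p \in S$ with $\Frob_\p$ realizing the pair $(M_1, M_2)$, so $\ell \mid \#A(k_\p)$ while $\ell \nmid \#A'(k_\p)$, as required.

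The main obstacle is the uniform version of the Goursat step: one needs the joint image to be the \emph{full} fibered product for all but finitely many $\ell$, not merely for $\ell$ beyond some bound that could a priori depend on the representations themselves. Controlling this uniformly ultimately relies on Faltings's isogeny theorem (to transfer a hypothetical infinite family of mod-$\ell$ isomorphisms into a $K$-isogeny) together with careful bookkeeping of exceptional primes where the $\GSp$ image hypothesis may fail. Once this uniform description of the mod-$\ell$ image is in hand, the rest of the argument reduces to a counting argument inside $\GSp$ and an application of Chebotarev within $S$.
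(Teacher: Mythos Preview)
Your approach is genuinely different from the paper's. The paper first shows, for each large $\ell$ satisfying the divisibility hypothesis, that $K(A'[\ell])\subseteq K(A[\ell])$ (this is the content of \cite[lem.~5]{HallPerucca} and \cite[sec.~5.1]{Ratazzi}), and then invokes the horizontal isogeny theorem (Theorem~\ref{mainhorizontal}) to conclude. You instead argue the contrapositive directly: assuming no $K$-isogeny, you want the joint mod-$\ell$ image to be the full fibered product $\GSp_{2g}(\F_\ell)\times_{\F_\ell^\times}\GSp_{2g'}(\F_\ell)$ for cofinitely many $\ell$, and then you produce an element $(M_1,M_2)$ with $1$ an eigenvalue of $M_1$ but not of $M_2$ and apply Chebotarev. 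This is close in spirit to how the paper handles \emph{products} in Theorem~\ref{new} via Lemma~\ref{magic}, and it is a perfectly reasonable route in the $\GSp$ case.

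There is, however, a genuine gap: you treat only the case where $A,A'$ are fully of type $\GSp$. The theorem also covers arbitrary elliptic curves, in particular CM elliptic curves, for which the mod-$\ell$ image is a Cartan subgroup rather than $\GSp_2(\F_\ell)$. Your Goursat argument, which hinges on the simplicity of $\mathrm{PSp}_{2g}(\F_\ell)$, does not apply there, and the mixed case (one curve CM, the other not) is likewise unaddressed. The paper handles this by first passing to $L=K\mathcal E\mathcal E'$ so that any CM is defined over the base (Lemma~\ref{lem4}), and then invoking the Frey--Jarden and Hindry--Ratazzi independence results which cover the Cartan case as well.

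A second, smaller point: the step ``if $A[\ell]\cong A'[\ell]$ (up to twist) for infinitely many $\ell$ then a standard Faltings/Tate-module argument produces a $K$-isogeny'' is precisely the horizontal isogeny theorem (Theorem~\ref{mainhorizontal}), and it is not an immediate consequence of Faltings: one cannot directly pass from mod-$\ell$ isomorphisms at infinitely many $\ell$ to a single $\ell$-adic isomorphism. The paper isolates this as a separate theorem relying on \cite{FreyJarden} and \cite{Ratazzi}; you should either cite it or indicate the extra work needed.
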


The proof is based on the following theorem, which is an application of results for elliptic curves by Serre and by Frey and Jarden~(\cite[lem.~9 and thm.~7]{Serre},~\cite[thm.~A]{FreyJarden}) and the corresponding results for abelian varieties fully of type $\GSp$  by Hindry and Ratazzi (\cite[thm.~1.6]{HRGSp}, \cite[thm.~1.5]{Ratazzi}). These kind of statements also relate to a problem considered by Kowalski~\cite[problem~1.2]{Kowalski}.

\begin{thm}[Horizontal isogeny theorem]\label{mainhorizontal}
Let $A,A'$ be admissible abelian varieties defined over a number field $K$. If the condition $K(A[{\ell}])\subseteq K(A'[{\ell}])$ holds for infinitely many prime numbers $\ell$ then $A,A'$ are $K$-isogenous.
\end{thm}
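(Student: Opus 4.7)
The plan is to argue by contrapositive: assume $A$ and $A'$ are not $K$-isogenous and show that the containment $K(A'[\ell])\subseteq K(A[\ell])$ can hold for only finitely many primes $\ell$.

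The key input is a vertical independence statement. Combining the fullness of each single mod-$\ell$ representation with the independence/maximality results of Serre and Frey--Jarden in the elliptic curve case and of Hindry--Ratazzi and Ratazzi in the general $\GSp$-case, one obtains: if $A$ and $A'$ are not $K$-isogenous, then for all but finitely many prime numbers $\ell$ the Galois group $\Gal(K(A[\ell], A'[\ell])/K)$ equals the full fibre product
$$\GSp_{2g}(\F_\ell)\times_{\F_\ell^\times}\GSp_{2g'}(\F_\ell),$$
where the two maps are the similitude characters and $g,g'$ denote the dimensions. Taking fixed fields of the kernels of the two projections, whose join is $\Sp_{2g}(\F_\ell)\times\Sp_{2g'}(\F_\ell)$, one concludes
$$K(A[\ell])\cap K(A'[\ell])=K(\zeta_\ell)$$
for all such $\ell$ (using also $[K(\zeta_\ell):K]=\ell-1$ for $\ell$ large).

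The theorem then follows by a crude degree comparison. If $K(A'[\ell])\subseteq K(A[\ell])$ for a prime $\ell$ in the above cofinite set, then $K(A'[\ell])\subseteq K(\zeta_\ell)$, so $[K(A'[\ell]):K]\leq\ell-1$. But fullness of the image gives $[K(A'[\ell]):K]=|\GSp_{2g'}(\F_\ell)|$, which is polynomial in $\ell$ of degree $g'(2g'+1)+1\geq 4$. This forces $\ell$ to be bounded, contradicting the hypothesis that infinitely many such $\ell$ exist.

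The main obstacle is extracting the fibre-product statement from the cited references: surjectivity of each mod-$\ell$ representation alone is not enough; what one needs is the precise maximality of the Galois image on the product $A\times A'$ for non-isogenous $A,A'$, which is exactly what Serre and Frey--Jarden give in dimension~$1$ and Hindry--Ratazzi (together with \cite{Ratazzi}) give in the higher-dimensional $\GSp$ setting. Once this vertical intersection identity is in hand, the remainder of the proof is the one-line size count above, with no further input needed from the two-variety structure.
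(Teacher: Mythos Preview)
Your contrapositive has a real gap: the independence/fibre-product results you invoke (Frey--Jarden, Hindry--Ratazzi, Ratazzi) require $A,A'$ to be non-isogenous over $\bar K$, not merely over $K$. So as written your argument only yields that $A,A'$ are $\bar K$-isogenous, and the descent to a $K$-isogeny is missing.

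This descent is not automatic. Take $A$ non-CM (or fully of type $\GSp$) and let $A'$ be a nontrivial quadratic twist of $A$. Then $A,A'$ are $\bar K$-isomorphic but typically not $K$-isogenous, and since $\rho_{A',\ell}=\rho_{A,\ell}\otimes\chi$ with $\chi$ quadratic, one has $[K(A[\ell],A'[\ell]):K(A[\ell])]\leq 2$ for every $\ell$. The joint image is nowhere near the full fibre product, and your intersection identity $K(A[\ell])\cap K(A'[\ell])=K(\zeta_\ell)$ fails. The theorem is still true for such a pair, but your size count does not see it.

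The paper separates the two steps. It first obtains a $\bar K$-isogeny by quoting Frey--Jarden (elliptic case) and \cite[thm.~1.5]{Ratazzi} ($\GSp$ case) directly. For the descent it argues as follows: if $f:A\to A'$ is defined over a finite Galois extension $L/K$ and $\sigma\in\Gal(L/K)$, then for each of the infinitely many $\ell$ with $K'_\ell\subseteq K_\ell$ one has $L\cap K_\ell K'_\ell=L\cap K_\ell=K$ by Proposition~\ref{prop:serre:indep}, so $\sigma$ extends to act trivially on both $A[\ell]$ and $A'[\ell]$, whence $(f-{}^\sigma\!f)|_{A[\ell]}=0$. Infinitely many such $\ell$ force $f={}^\sigma\!f$. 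Note that this step genuinely uses the hypothesis $K'_\ell\subseteq K_\ell$, not just the existence of a $\bar K$-isogeny.

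A secondary issue: your formula $[K(A'[\ell]):K]=|\GSp_{2g'}(\F_\ell)|$ fails for CM elliptic curves, whose mod-$\ell$ image is only a Cartan subgroup. The paper handles this by first replacing $K$ by $K\mathcal E\mathcal E'$ so that any CM is defined over the base, with Lemma~\ref{lem4} recovering the $K$-isogeny at the end; Frey--Jarden then treats the CM and non-CM cases uniformly.
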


Note, the condition $K(A[\ell])=K(A'[\ell])$ for every prime number $\ell$ does not in general imply that $A$ and $A'$ are $K$-isomorphic because of an example by Zarhin, see~\cite[sec.~12]{Zarhin}: there are elliptic curves that are not $K$-isomorphic but such that for every prime number $\ell$ there exists a $K$-isogeny between them of degree coprime to $\ell$. 

We also consider products:

\begin{thm}\label{new}
Let $A$ and $A'$ be abelian varieties defined over a number field $K$. 
Suppose that the geometrically simple $\bar{K}$-quotients of $A$ and of $A'$ are admissible. Let $S$ be a density-one set of primes of $K$ over which $A, A'$ have good reduction.
\begin{enumerate}
\item If the condition 
$$\#A(k_\p)=\#A'(k_\p)$$
holds for every $\p\in S$ then $A$ and $A'$ are $\bar{K}$-isogenous.
\item If the condition 
$$\ell\mid\#A(k_\p)\quad \Rightarrow\quad \ell\mid\#A'(k_\p)$$
holds for infinitely many prime numbers $\ell$ and  for every $\p\in S$ then every geometrically simple $\bar{K}$-quotient of $A$ is also a $\bar{K}$-quotient of $A'$.
\end{enumerate}
\end{thm}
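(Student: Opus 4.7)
\emph{Setup.} My first step will be to pass to a finite extension $L/K$ large enough that (i) every $\bar K$-simple isogeny factor of $A$ and of $A'$ is defined over $L$ and (ii) any two such factors that are $\bar K$-isogenous are already $L$-isogenous. Over $L$ I can then write
$$A\sim_L \prod_i B_i^{n_i}\qquad\text{and}\qquad A'\sim_L \prod_j (B_j')^{m_j},$$
with $L$-simple factors $B_i,B_j'$ each either elliptic or fully of type $\GSp$, so that any two factors from the combined list are either $L$-isogenous or non-$\bar K$-isogenous. The primes of $L$ above $S$ form again a density-one set to which both hypotheses transfer; I keep the notation $\p$ for such a prime and write $q:=\#k_\p$. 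The essential tool will be the independence theorem of Hindry and Ratazzi (\cite[thm.~1.6]{HRGSp}, \cite[thm.~1.5]{Ratazzi}): for any finite family $C_1,\ldots,C_s$ of pairwise non-isogenous factors from the combined list and any sufficiently large $\ell$, the image of $\Gal(\bar L/L)$ on $\bigoplus_a C_a[\ell]$ is the fibre product over the cyclotomic character of the individual $\GSp_{2g_a}(\F_\ell)$.

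\emph{Part (2).} Given a $\bar K$-simple quotient $B\sim_{\bar K} B_{i_0}$ of $A$, I will argue by contradiction that some $B_j'$ must be $\bar K$-isogenous to $B_{i_0}$. A direct matrix count in $\GSp_{2g}(\F_\ell)$ shows that the proportion of elements having $1$ as an eigenvalue is $\asymp 1/\ell$, so by Chebotarev the density of $\{\p:\ell\mid\#B_{i_0}(k_\p)\}$ is $\asymp 1/\ell$. If no $B_j'$ were $\bar K$-isogenous to $B_{i_0}$, the independence above applied to $\{B_{i_0}\}\cup\{B_j'\}_j$ would give, for each $j$,
$$\mathrm{density}\bigl(\{\p:\ell\mid\#B_{i_0}(k_\p)\ \text{and}\ \ell\mid\#B_j'(k_\p)\}\bigr)=O(1/\ell^2);$$
summing over $j$ and combining with the containment $\{\p:\ell\mid\#B_{i_0}(k_\p)\}\subseteq\{\p:\ell\mid\#A'(k_\p)\}$ forced by the hypothesis would give $\asymp 1/\ell \leq O(1/\ell^2)$, impossible for $\ell$ sufficiently large in the infinite set provided by the hypothesis. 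Hence some $B_j'$ is $\bar K$-isogenous to $B_{i_0}$.

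\emph{Part (1).} Because $\#A(k_\p)=\#A'(k_\p)$ implies the divisibility hypothesis of (2) for every $\ell$ and symmetrically between $A$ and $A'$, part (2) identifies the sets of $\bar K$-isogeny classes of simple factors of $A$ and of $A'$; after relabelling I may assume $A\sim_L\prod_i B_i^{n_i}$ and $A'\sim_L\prod_i B_i^{m_i}$ with the same $B_i$'s. Set $d_i:=n_i-m_i$; the hypothesis becomes $\prod_i \#B_i(k_\p)^{d_i}=1$ for $\p$ in a density-one set. Taking logarithms and applying the Weil expansion
$$\log \#B_i(k_\p)=g_i\log q-a_\p(B_i)/q+O(1/q),$$
with $g_i=\dim B_i$ and $a_\p(B_i)=\Tr(\Frob_\p\mid T_\ell B_i)$, the $\log q$-coefficient yields $\sum_i d_i g_i=0$ and the remainder yields $\sum_i d_i\,a_\p(B_i)=O(1)$, equivalently $\sum_i d_i\cdot a_\p(B_i)/\sqrt q \to 0$ along the density-one set. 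By the same Hindry--Ratazzi theorem the normalised traces $(a_\p(B_i)/\sqrt q)_i$ equidistribute with full support in $\prod_i[-2g_i,2g_i]$, so any nontrivial linear form is bounded away from $0$ on a set of positive density; this forces $d_i=0$ for all $i$, hence $n_i=m_i$ and $A\sim_L A'$, which gives $A\sim_{\bar K} A'$.

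\emph{Main obstacle.} The decisive step in both parts is the invocation of the Hindry--Ratazzi independence theorem for products of pairwise non-isogenous fully-$\GSp$ or elliptic factors, as this is what converts a divisibility condition into a density bound of order $1/\ell^2$ in part (2) and an equality of point counts into a joint equidistribution statement in part (1). The matrix count in $\GSp$, the Weil expansion of $\log \#B(k_\p)$, and the passage from $\sim_L$ to $\sim_{\bar K}$ should be routine once the independence is in place.
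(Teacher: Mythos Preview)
Your route is genuinely different from the paper's. The paper never computes densities or expands $\log\#B(k_\p)$. It proves a single combinatorial lemma (Lemma~\ref{magic}): given pairwise non-$\bar K$-isogenous factors $A_1,\ldots,A_n,A'_1,\ldots,A'_m$, for every $\ell\gg0$ there is $\sigma\in\Gal(\bar K/K)$ acting as the identity on each $A'_j[\ell]$ and without nonzero fixed point on each $A_i[\ell]$. One application of Chebotarev then produces a positive-density set of $\p$ with $\ell\mid\#A(k_\p)$ but $\ell\nmid\#A'(k_\p)$, which settles Part~2 immediately. For Part~1 the paper first cancels a common factor $B$ (the hypothesis passes to $A/B$, $A'/B$) and iterates; once no common factor remains, the same lemma gives a contradiction unless both sides are zero. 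So the paper never needs to compare multiplicities $n_i,m_i$ at all.

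Your Part~2 is correct in outline, with one quibble: for a CM elliptic factor at a prime $\ell$ inert in the CM field the image is a non-split Cartan, and only the identity has $1$ as an eigenvalue, so the single density is $\asymp 1/\ell^{2}$ rather than $1/\ell$. The joint density then drops by a further factor $\asymp 1/\ell$, so the comparison still fails for large $\ell$ and your argument survives; but the blanket claim ``$\asymp 1/\ell$ by a matrix count in $\GSp_{2g}(\F_\ell)$'' is not right in that case, and your description of the joint image as the full fibre product of $\GSp$'s likewise ignores CM factors.

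Your Part~1 has a genuine gap at the last step. The assertion that the normalised traces $(a_\p(B_i)/\sqrt q)_i$ \emph{equidistribute} with full support in $\prod_i[-2g_i,2g_i]$ is the generalised Sato--Tate conjecture for products, which is open for abelian varieties of type $\GSp$ in dimension $\geq 2$; the Hindry--Ratazzi theorems you cite concern the mod-$\ell$ image and say nothing about Archimedean equidistribution. Your argument up to $\sum_i d_i\,a_\p(B_i)=O(1)$ on a density-one set is fine. To finish unconditionally you can stay mod $\ell$: for $\ell$ large, Hindry--Ratazzi independence plus Chebotarev shows that $\sum_i d_i\,a_\p(B_i)\bmod\ell$ hits every residue class with positive density (fix the multiplier and let one $\Tr(\gamma_i)$ run over $\F_\ell$), which contradicts boundedness once $\ell$ exceeds the implied constant. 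Alternatively, and more simply, you can adopt the paper's cancellation trick: once Part~2 gives the same set of factors, divide both sides by one common $B_i$ and iterate, never touching the $d_i$'s.
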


In other words, knowing which prime numbers divide  $\#A(k_\p)$ for a density-one set of primes $\p$ is sufficient to characterize the simple factors of the Poincar\'e Reducibility Theorem decomposition of $A\otimes_K \bar{K}$ up to isogeny.

Note, in our results we cannot consider only finitely many prime numbers $\ell$: for example if the Mordell-Weil groups $A(K)$ and $A'(K)$ respectively contain all points of order $\ell$ for every prime number under consideration, then our assumptions provide no further information.

We conclude with an open problem, namely investigating to which extent the following property fails: for an abelian variety $A$ defined over a number field $K$, and for $\p$ varying in a density-one set of primes of $K$, the function $\p\mapsto\#A(k_\p)$ characterizes the isogeny class of $A$.\\

\noindent {\bf Acknowledgements:} The author thanks Davide Lombardo for Theorem~\ref{Lombardo} and the anonymous referee for suggesting this possible improvement to the paper.

\section{Preliminaries}\label{sec:GSp}

Let $K$ be a number field, and fix a Galois closure $\bar{K}$ of $K$. Let $A$ be an abelian variety of dimension $g$ defined over $K$. If $\ell$ is a prime number, we denote by $A[\ell]$ the group of $\ell$-torsion points and by $K_\ell:=K(A[\ell])$ the smallest extension of $K$ over which these points are defined. We call $G_\ell$ the Galois group of $K_\ell/K$, which we consider embedded in $\GL_{2g}(\F_\ell)$ via the mod-$\ell$ representation, after having fixed a basis for $A[\ell]$.

We fix a polarization of $A$ and suppose $\ell$ does not divide its degree so that one can define the Weil pairing on $A[\ell]$.  The pairing takes its values in $\mu_\ell$, the group of $\ell$-th roots of unity, so its existence implies $\mu_\ell\subseteq K_\ell$.
We write $H_\ell\subseteq G_\ell$ for the Galois group of $K_\ell/K(\mu_\ell)$.  There is a natural embedding $G_\ell/H_\ell\to\Aut(\mu_\ell)=\F_\ell^\times$, and we write $\chi_\ell:G_\ell\to\F_\ell^\times$ for the composition of this embedding with the quotient map $G_\ell\to G_\ell/H_\ell$. The induced homomorphism $\chi_\ell:G_K\to\F_\ell^\times$ is the cyclotomic character.

The group $G_\ell$ is contained in the general symplectic group $\GSp_{2g}(\mathbb F_{\ell})$ so we can consider the multiplier map $$\nu: \GSp_{2g}(\mathbb F_{\ell})\rightarrow \mathbb F_{\ell}^\times\,.$$ The $g$-th power $\nu^g$ equals the determinant and restricting to $G_\ell$ the multiplier map $\nu$ gives the cyclotomic character $\chi_\ell$. Consequently $H_\ell$ is contained in the symplectic group $\Sp_{2g}(\mathbb F_{\ell})$.

Let $S$ be a density-one set of primes of $K$ of good reduction for $A$. If $v_\ell$ denotes the $\ell$-adic valuation, we define $\Phi_\ell$ to be the following map:
$$
	\Phi_\ell: S\to\{0,1\} \qquad \p \mapsto \min\{1,v_\ell(\# A(k_\p))\}\,.
$$
Note, this map distinguishes for each $\p\in S$ whether $\ell$ divides or not the positive integer $\# A(k_\p)$. We also write $\mathcal E:=\End_{\bar{K}}(A)\otimes\Q$.

We repeatedly make use of the following: If $A$ is an elliptic curve without CM then for all but finitely many $\ell$ we have $G_\ell=\GL_2(\F_\ell)$, see~\cite[th. 2]{Serre}. If $A$ is an elliptic curve with CM defined over $K$ then for all but finitely many $\ell$ we have that $G_\ell$ is a Cartan subgroup of $\GL_2(\F_\ell)$, see~\cite[\S 4.5, cor.]{Serre}. Recall that the cardinality of a Cartan subgroup of $\GL_2(\F_\ell)$ is either $(\ell-1)^2$ or $\ell^2-1$ according to whether it is split or non split. Moreover, all elements of a Cartan subgroup of $\GL_2(\F_\ell)$ are semi simple because they are diagonalizable over $\bar{\F}_\ell$.

As a reference for abelian varieties (fully) of type $\GSp$ we suggest~\cite{Serre2, HRGSp, Ratazzi}.  
A principally polarized abelian variety $A$ of dimension $g$ is said to be fully of type $\GSp$ if for all but finitely many prime numbers $\ell$ the image of the mod-$\ell$ representation is the group $\GSp_{2g}(\F_\ell)$.
A necessary condition for $A$ to be fully of type $\GSp$ is $\End_{\bar{K}} A=\mathbb Z$, and this condition is also sufficient in dimension $2$ or odd by~\cite[thm.~3]{Serre2}.
In particular, abelian varieties fully of type $\GSp$ are geometrically simple. Abelian varieties fully of type $\GSp$ are also of type $\GSp$ 
(i.e. the Mumford-Tate group is $\GSp_{2g}$) by a result of Deligne and others, see \cite[thm 2.7]{HREC}. In particular the Hodge group is $\Sp_{2g}$, see~\cite[def.~5.1]{HRGSp}. 

We make use of the following two lemmas about the mod-$\ell$ representation of abelian varieties:

\begin{lem}\label{lem:frob}
Let $A$ be an abelian variety defined over a number field $K$. 
Suppose $\p\in S$ is not over $\ell$ and does not ramify in $K_\ell$ and $\q$ is a prime of $K_\ell$ over $\p$.  If $\phi_\q\in G_\ell$ is the Frobenius $\q\mid \p$, then $\Phi_\ell(\p)=1$ if and only if $\det(\phi_\q-1)=0$.
\end{lem}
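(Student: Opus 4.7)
The key identification to make is between the mod-$\ell$ Galois representation and the action of geometric Frobenius on the $\ell$-torsion of the reduction. Since $\p$ is a prime of good reduction not lying over $\ell$, the reduction map induces a Galois-equivariant isomorphism $A[\ell]\to A_\p(\overline{k_\p})[\ell]$ (both are free $\F_\ell$-modules of rank $2g$, and the reduction is known to be injective on prime-to-residue-characteristic torsion). The unramifiedness hypothesis says that the decomposition group $D_\q\subseteq G_K$ maps isomorphically onto $\Gal(\overline{k_\p}/k_\p)$, so the Frobenius element $\phi_\q\in G_\ell$ corresponds under the identification above to the geometric Frobenius of $k_\p$ acting on $A_\p(\overline{k_\p})[\ell]$.

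The plan is then to rewrite $\Phi_\ell(\p)=1$ as a statement about fixed points and then as a statement about a determinant. First, $\ell\mid\#A(k_\p)$ holds if and only if the finite abelian group $A(k_\p)$ contains a point of order $\ell$, equivalently $A_\p(k_\p)[\ell]\neq 0$. Second, taking $\Gal(\overline{k_\p}/k_\p)$-fixed points,
$$A_\p(k_\p)[\ell]=A_\p(\overline{k_\p})[\ell]^{\mathrm{Frob}}\;\cong\;A[\ell]^{\phi_\q}\;=\;\ker(\phi_\q-1\colon A[\ell]\to A[\ell])\,.$$
Third, since $A[\ell]$ is a finite-dimensional $\F_\ell$-vector space and $\phi_\q-1$ is an $\F_\ell$-linear endomorphism of it, this kernel is nontrivial if and only if $\det(\phi_\q-1)=0$ in $\F_\ell$. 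Chaining these equivalences gives the lemma.

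The only nontrivial ingredient is the standard fact that good reduction at a prime not dividing $\ell$ yields an isomorphism on $\ell$-torsion compatible with Galois; everything else is linear algebra and the structure theorem for finite abelian groups. There is no real obstacle — the statement is essentially a bookkeeping translation between the arithmetic quantity $\#A(k_\p)\bmod\ell$ and the mod-$\ell$ characteristic-polynomial value $\det(\phi_\q-1)$, which agree because $\#A(k_\p)=\det(\mathrm{Frob}_\p-1\mid T_\ell A)$.
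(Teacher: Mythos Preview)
Your proof is correct and follows essentially the same approach as the paper's: both identify $A(k_\p)[\ell]$ with $\ker(\phi_\q-1)\subseteq A[\ell]$ via reduction and then observe that this kernel is nontrivial precisely when $\det(\phi_\q-1)=0$. The paper compresses this into a single sentence, whereas you have spelled out the intermediate steps (the reduction isomorphism on $\ell$-torsion, the identification of Frobenius, and the linear-algebra equivalence), but the substance is identical.
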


\begin{proof}
The embedding $A(k_\p)\to A(k_{\q})$ identifies $A(k_\p)[\ell]$ with $\ker(\phi_q-1)\subseteq A[\ell]$, hence $\ell\mid\#A(k_\p)$ if and only if 1 is an eigenvalue of $\phi_\q$.
\end{proof}

We also consider an abelian variety $A'$ over $K$ and analogously define  $K'_{\ell}$, $G'_{\ell}$, $H'_{\ell}$, $\Phi'_\ell$, $\mathcal E'$. We then suppose that the primes in $S$ are also of good reduction for $A'$. We write $\G_\ell\subseteq G_\ell\times G'_\ell$ for the Galois group of the compositum $K_\ell K'_\ell/K$.

\begin{lem}\label{lem1}
Let $A,A'$ be abelian varieties defined over a number field $K$. If $\Phi_\ell\leq \Phi_\ell'$, then $\det(\g-1)=0$ implies $\det(\g'-1)=0$ for every $(\g,\g')\in\G_\ell$.
\end{lem}
\begin{proof}
By the Cebotarev Density Theorem there is some prime $\p\in S$ not over $\ell$, unramified in $K_\ell K'_\ell$ and whose Frobenius conjugacy class in $\G_\ell$ contains $(\g,\g')$.  Lemma~\ref{lem:frob} implies the values $\Phi_\ell(\p)$, $\Phi_\ell'(\p)$ respectively identify whether or not $\det(\g-1),\det(\g'-1)$ are non-zero, and thus the hypothesis $\Phi_\ell(\p)\leq \Phi_\ell'(\p)$ implies the statement.
\end{proof}

We will apply the following lemma to assume that for elliptic curves the CM is defined over the base field:

\begin{lem}\label{lem4}
If two elliptic curves $A,A'$ defined over a number field $K$ are $K\mathcal E\mathcal E'$-isogenous, then they are $K$-isogenous.
\end{lem}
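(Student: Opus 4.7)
The plan is to observe first that the hypothesis forces $A$ and $A'$ to have the same endomorphism algebra over $\Kbar$, and then to handle the only substantive case, in which both are CM elliptic curves with the CM field not already contained in $K$; in every other case one has $K\mathcal E\mathcal E' = K$ and there is nothing to prove.

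More precisely, since $A$ and $A'$ are $K\mathcal E\mathcal E'$-isogenous they are in particular $\Kbar$-isogenous, so $\mathcal E \simeq \mathcal E'$ as $\Q$-algebras. For elliptic curves this leaves only two possibilities: either $\mathcal E = \mathcal E' = \Q$ (neither has CM), or $\mathcal E = \mathcal E' = F$ for a common imaginary quadratic field $F$. In the first case, and also when $F \subseteq K$, we have $K\mathcal E\mathcal E' = K$ and the conclusion is immediate. So I may assume $L := KF$ is a proper quadratic extension of $K$, and write $\sigma$ for the nontrivial element of $\Gal(L/K)$. Given any nonzero $L$-isogeny $\phi : A \to A'$, the morphism $\phi + \phi^\sigma$ is $\sigma$-invariant and therefore $K$-rational; if nonzero, it is the desired $K$-isogeny.

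The main obstacle is the remaining case $\phi^\sigma = -\phi$, where I would exploit the CM structure of $A'$. The key input is the standard fact that, since $F \not\subseteq K$, one has $\End_K(A') = \Z$ while $\End_L(A') = \mathcal O$ is an order in $F$; consequently $\sigma$ must act on the rank-two $\Z$-module $\mathcal O$ with fixed subring exactly $\Z$, which forces the action to coincide with the Galois involution of $F/\Q$. Then any nonzero trace-zero element $\alpha \in \mathcal O$ (for instance $\omega - \bar\omega$ for any $\omega \in \mathcal O \setminus \Z$) satisfies $\sigma\alpha = -\alpha$, and the composition $\psi := \alpha \circ \phi$ fulfills $\psi^\sigma = \alpha^\sigma \circ \phi^\sigma = (-\alpha)\circ(-\phi) = \psi$. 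Hence $\psi \in \Hom_K(A, A')$, and it is nonzero because $\alpha$ and $\phi$ are, giving the required $K$-isogeny.
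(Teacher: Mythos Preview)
Your argument is correct. The reduction to the CM case with $F\not\subseteq K$ is sound, the averaging $\phi\mapsto\phi+\phi^\sigma$ is the natural descent trick, and in the residual case $\phi^\sigma=-\phi$ your use of a trace-zero endomorphism $\alpha\in\End_L(A')$ with $\alpha^\sigma=-\alpha$ to twist $\phi$ into a $\sigma$-invariant (hence $K$-rational) nonzero isogeny $\alpha\circ\phi$ is clean and complete. The only facts you invoke --- that $\End_K(A')=\Z$ when $F\not\subseteq K$, that all endomorphisms are defined over $L=KF$, and that $\sigma$ acts on the order $\mathcal O\subset F$ as complex conjugation --- are standard CM theory (the tangent-space argument forces the eigenvalue of any $\alpha\in\End_K(A')$ on $\mathrm{Lie}(A')$ to lie in $K\cap F$, which gives all three at once).

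By contrast, the paper does not give its own argument here: it simply cites \cite[lem.~4]{HallPerucca}. So your proof is a genuine, self-contained replacement for that citation. Whether it coincides with the argument in the cited reference I cannot say from the paper alone, but your route --- Galois-average plus twist by a purely imaginary endomorphism --- is the canonical one and is exactly what one would expect to find there.
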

\begin{proof} This assertion is proven for example in~\cite[lem.~4]{HallPerucca}.
\end{proof}

\section{Independence properties of torsion fields}

In this section, we consider finitely many abelian varieties and investigate the fields obtaining by adding the respective torsion points of prime order.

\begin{prop}\label{prop:serre:indep}
Let $A$ be an abelian variety defined over a number field $K$. Suppose that $A$ is fully of type $\GSp$ or that A is an elliptic curve with CM defined over $K$.
If $L$ is a finite extension of $K$ then for all but finitely many prime numbers $\ell$ we have $L\cap K_\ell=K$.
\end{prop}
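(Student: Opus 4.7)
The plan is to reduce the conclusion to a group-theoretic problem about small-index normal subgroups of $G_\ell$, and then dispose of the remaining arithmetic via a ramification count.

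First I would replace $L$ by its Galois closure over $K$: since $L\cap K_\ell$ can only grow under this enlargement, it suffices to prove the claim with $L/K$ Galois of some degree $d$. Then $M_\ell:=L\cap K_\ell$ is Galois over $K$ and corresponds to a normal subgroup $N_\ell\trianglelefteq G_\ell$ with $[G_\ell:N_\ell]=[M_\ell:K]\leq d$, so the goal becomes $N_\ell=G_\ell$ for all but finitely many $\ell$.

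In the non-$\CM$ elliptic curve and $\GSp$ cases, for $\ell$ outside a finite set one has $G_\ell=\GL_2(\F_\ell)$ or $G_\ell=\GSp_{2g}(\F_\ell)$. Its derived subgroup is $\SL_2(\F_\ell)$ or $\Sp_{2g}(\F_\ell)$, and modulo its centre this is the non-abelian simple group $\PSL_2(\F_\ell)$ or $\mathrm{PSp}_{2g}(\F_\ell)$, whose order grows without bound in $\ell$. A standard argument using simplicity then forces every normal subgroup of $G_\ell$ of index at most $d$ to contain the derived subgroup once $\ell$ is large; consequently $M_\ell$ is contained in the fixed field of the determinant or of the similitude multiplier, which is $K(\mu_\ell)$ by the Weil pairing. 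One is thus reduced to showing $L\cap K(\mu_\ell)=K$ for $\ell$ large. For $\ell$ bigger than $[K:\Q]$ and coprime to the conductor of the maximal abelian-over-$\Q$ subextension of $K$, a local ramification count shows $K(\mu_\ell)/K$ is totally ramified at every prime above $\ell$, so any non-trivial subextension is ramified at such a prime. Since $L$ has only finitely many subfields, each with a fixed finite ramification set over $K$, for $\ell$ large none of these sets contains a prime above $\ell$, forcing $L\cap K(\mu_\ell)=K$.

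The $\CM$ elliptic curve case (with $\CM$ defined over $K$) is the principal obstacle: here $G_\ell$ is abelian, isomorphic to a Cartan subgroup $(\mathcal{O}_F/\ell\mathcal{O}_F)^\times$ of $\GL_2(\F_\ell)$ with $F\subseteq K$ the $\CM$ field, so the non-abelian simplicity argument is unavailable and many normal subgroups of $G_\ell$ of index $\leq d$ exist. Here one must argue arithmetically using class field theory: $K_\ell/K$ is abelian, ramified only at primes above $\ell$ and at the fixed finite set of bad reduction primes, and the theory of $\CM$ (realising $K_\ell$ inside a ray class field whose modulus involves $\ell$) shows that the inertia at a prime above $\ell$ has index in $G_\ell$ bounded independently of $\ell$. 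Hence the maximal subextension of $K_\ell/K$ unramified at primes above $\ell$ has bounded degree and a fixed finite ramification set, so by the Hermite--Minkowski finiteness theorem it takes only finitely many values as $\ell$ varies. Noting that $M_\ell\subseteq L$ is itself unramified at primes above $\ell$ (since $L$ has a fixed ramification set avoiding $\ell$ for $\ell$ large), one confines $M_\ell$ to a finite list of candidates, and a further argument using the arithmetic of the $\CM$ Hecke character rules out each non-trivial candidate appearing for infinitely many $\ell$.
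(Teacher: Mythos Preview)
The paper itself does not prove this proposition: it simply refers to \cite[prop.~1]{HallPerucca} for elliptic curves and to \cite[lem.~5.7]{Ratazzi} for the $\GSp$ case. Your proposal is therefore far more detailed than what the paper offers. For the non-$\CM$ elliptic and the $\GSp$ cases your argument is correct and standard: once $\ell$ is large, every normal subgroup of $\GL_2(\F_\ell)$ or $\GSp_{2g}(\F_\ell)$ of index at most $d$ contains $\SL_2(\F_\ell)$ or $\Sp_{2g}(\F_\ell)$ (by the simplicity of $\PSL_2$ and $\mathrm{PSp}_{2g}$), so $M_\ell\subseteq K(\mu_\ell)$; and $K(\mu_\ell)/K$ is totally ramified above $\ell$ once $\ell$ is unramified in $K$, which forces $L\cap K(\mu_\ell)=K$ since $L$ has a fixed ramification set.

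In the $\CM$ case your outline has a genuine gap. Confining $M_\ell$ to a finite list is automatic (it is a subfield of the fixed field $L$, so Hermite--Minkowski is not even needed), but the unspecified ``further argument using the arithmetic of the $\CM$ Hecke character'' is exactly the point that remains to be proved, and you do not supply it. The cleanest fix is to sharpen your inertia estimate from \emph{bounded index} to \emph{index one}. For $\ell$ unramified in $K$ and prime to the conductor of the Hecke character $\psi$, the restriction of $\psi$ to the local units at each $\mathfrak{p}\mid\ell$ is, by its algebraic infinity type, a norm map to the relevant completion of $F$; its reduction modulo $\ell$ is a norm between finite fields, hence surjective onto the corresponding factor of $(\mathcal{O}_F/\ell)^\times$. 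Running over the primes of $K$ above the (one or two) primes of $F$ dividing $\ell$, one sees that the inertia subgroups together generate all of $G_\ell$. Consequently $K_\ell/K$ has no nontrivial subextension unramified at every prime above $\ell$; since $M_\ell\subseteq L$ is so unramified for $\ell$ large, $M_\ell=K$ follows directly, with no residual ``further argument'' required.
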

\begin{proof}
For elliptic curves, we refer to \cite[prop.~1]{HallPerucca}. The proof for abelian varieties fully of type $\GSp$ is analogous, see~\cite[lem.~5.7]{Ratazzi}.
\end{proof}

The following theorem is an easy application of results of Hindry, Ratazzi and Lombardo:

\begin{thm}\label{propHR}
Let $A_1,\ldots, A_N$ be admissible abelian varieties defined over a number field $K$, in pairs not $\bar{K}$-isogenous. Then there is some integer $c>0$ such that the following holds: for every prime number $\ell$ the extensions  $K(A_i[\ell])$ for $i=1,\ldots, N$ are linearly disjoint over some Galois extension of $K(\mu_\ell)$ of degree dividing $c$.
\end{thm}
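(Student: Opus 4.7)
The plan is to split the primes $\ell$ into a finite exceptional set $T$ and its complement, and treat the two cases separately. For $\ell\notin T$ I aim to prove the stronger statement that the fields $K(A_i[\ell])$ are already linearly disjoint over $K(\mu_\ell)$ itself, so that one may take $F_\ell:=K(\mu_\ell)$, of degree $1$ over itself. For $\ell\in T$ I simply take $F_\ell$ to be the full compositum $\prod_i K(A_i[\ell])$: each factor sits inside $F_\ell$, so linear disjointness over $F_\ell$ is automatic, and since $T$ is finite the indices $[F_\ell:K(\mu_\ell)]$ for $\ell\in T$ admit a finite least common multiple, which (up to one further fixed factor coming from the reduction below) defines~$c$.

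In the elliptic curve case I first replace $K$ by $K':=K\mathcal E_1\cdots\mathcal E_N$, using Lemma~\ref{lem4}, so that any CM is defined over the new base field. This introduces a fixed multiplicative cost $[K':K]$ into $c$. A routine restriction-of-Galois-groups computation then converts linear disjointness of the $K'(A_i[\ell])$ over $K'(\mu_\ell)$ into linear disjointness of the $K(A_i[\ell])$ over the subfield $M\cap K'(\mu_\ell)\subseteq M$, where $M:=\prod_i K(A_i[\ell])$, and the index of this subfield over $K(\mu_\ell)$ is bounded by $[K':K]$. The main step, carried out over $K'$, is then to show that for every $\ell\notin T$ the natural injection
\[
\Gal\!\bigl(\textstyle\prod_i K'(A_i[\ell])/K'(\mu_\ell)\bigr)\;\hookrightarrow\;\prod_{i=1}^N H_\ell^{(i)}
\]
is an isomorphism. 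This is an instance of Goursat's lemma, whose hypothesis is furnished by the independence theorems of Serre, Frey--Jarden, and, for the $\GSp$ case, Hindry--Ratazzi~\cite[thm.~1.6]{HRGSp},~\cite[thm.~1.5]{Ratazzi}: the pairwise $\bar K$-non-isogeny hypothesis combined with these results rules out non-trivial common sub-quotients of the mod-$\ell$ images for all but finitely many $\ell$, and such exceptional $\ell$ are absorbed into $T$.

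I expect the main obstacle to lie in the CM elliptic curve subcase. For non-CM elliptic curves and for abelian varieties fully of type $\GSp$, the Goursat step is essentially automatic, because the groups $\SL_2(\F_\ell)$ and $\Sp_{2g_i}(\F_\ell)$ are perfect and simple modulo centre for large $\ell$, so the only way to have surjective projections onto non-isomorphic factors is via the full product. By contrast, for CM elliptic curves the groups $H_\ell^{(i)}$ are abelian subgroups of a Cartan and a priori admit many common characters; here the absence of common mod-$\ell$ quotients must be extracted from the theory of Hecke characters attached to pairwise non-isogenous CM elliptic curves, in the spirit of Proposition~\ref{prop:serre:indep}. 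Once this input is in place the argument closes uniformly in $\ell\notin T$, and the collection of finitely many degrees $[F_\ell:K(\mu_\ell)]$ for $\ell\in T$, multiplied by $[K':K]$, produces the required constant~$c$.
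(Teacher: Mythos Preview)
Your plan is quite different from the paper's proof, which is essentially a citation: for elliptic curves the paper applies \cite[prop.~6.2]{HREC} (iterated $N-1$ times, with the hypotheses supplied by \cite[lem.~2.4 and thm.~2.10]{HREC} and the index bounded independently of $\ell$ by the uniformity of the cokernel in \cite[thm.~2.10]{HREC}), and for the $\GSp$ case it invokes \cite[thm.~1.4~(2),(3)]{HRGSp} directly. No Goursat argument, no splitting into small and large $\ell$, no reduction to CM-over-$K$ is carried out in the paper itself; all of that is packaged inside the cited Hindry--Ratazzi results.

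Your outline is in spirit a rederivation of those results, but it has real gaps. First, the appeal to Lemma~\ref{lem4} is misplaced: that lemma descends \emph{isogenies} from $K\mathcal E\mathcal E'$ to $K$, whereas here nothing about isogenies is being concluded, so it does not justify the base change for this theorem. Second, your descent from $K'$ to $K$ is not correct as stated: for the $K(A_i[\ell])$ to be linearly disjoint over a field $F$, one needs $F\subseteq K(A_i[\ell])$ for \emph{every} $i$, and your candidate $M\cap K'(\mu_\ell)$ (with $M=\prod_i K(A_i[\ell])$) need not lie in each individual $K(A_i[\ell])$; the correct bounded-degree base is rather $\bigcap_i K(A_i[\ell])$, and one must still argue linear disjointness over it, which does not follow formally from linear disjointness upstairs. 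Third, and most seriously, the CM subcase is not actually proved: you assert that the absence of common mod-$\ell$ characters ``must be extracted from the theory of Hecke characters'', but this is precisely the substance of \cite[prop.~6.2]{HREC}, so your argument ultimately rests on the same input the paper cites. In short, the Goursat strategy is the right idea behind the quoted theorems, but as written your proposal defers the hard step rather than supplying it, and the paper sidesteps the issue by citing the finished results.
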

\begin{proof} Up to increasing $c$, it suffices to find an extension of $K(\mu_\ell)$ of degree at most $c$, rather than dividing $c$. Since the Galois closure of an extension of degree $d$ has degree at most $d!$, it is also not a problem to require that the extension is Galois, again up to increasing $c$. 
For $N$ elliptic curves, we may apply~\cite[prop.~6.2]{HREC} $N-1$ times, where the assumptions are satisfied by~\cite[lem.~2.4 and thm.~2.10]{HREC}. Note, the finite index in~\cite[prop.~6.2]{HREC} is independent of $\ell$ because the same is true for the cokernel in~\cite[thm.~2.10]{HREC}.
If the abelian varieties are all fully of type $\GSp$ then the assertion is proven in~\cite[thm.~1.4~(2)~and~(3)]{HRGSp}. 

Recall that elliptic curves without CM are fully of type $\GSp$. Then the mixed case consists of one product of abelian varieties fully of type $\GSp$ times one product of elliptic curves with CM. Up to multiplying $c$ by a finite constant, we may suppose that the CM of each elliptic curve is defined over $K$. 
We apply Theorem~\ref{Lombardo} to conclude.
\end{proof}

The following statement relates to results in \cite{HRGSp} and \cite{Lombardo}:

\begin{thm}[Lombardo 2015]\label{Lombardo}
Let $A=\prod_{i=1}^n A_i$ and $B=\prod_{j=1}^m B_j$ be abelian varieties defined over $K$. Suppose that $A_1,\ldots, A_n$ are fully of type $\GSp$, in pairs not $\bar{K}$-isogenous.
Suppose that $B_1,\ldots, B_m$ are elliptic curves with CM defined over $K$, in pairs not $\bar{K}$-isogenous.  Then for every prime number $\ell\gg 0$ the torsion fields $K(A[\ell])$ and $K(B[\ell])$ are linearly disjoint over $K(\mu_{\ell})$.
\end{thm}

\begin{proof}
Since we are assuming that the CM of the elliptic curves is defined over $K$, the extension $K(B[\ell])/K(\mu_{\ell})$ is abelian. By Lemma \ref{Lemma-Lombardo} we know that for $\ell\gg 0$ the group $K(A[\ell])/K(\mu_{\ell})$ does not have any non-trivial abelian quotients. By a straight-forward application of the Goursat's Lemma we deduce that $K(A[\ell])$ and $K(B[\ell])$ are linearly disjoint over $K(\mu_{\ell})$.
\end{proof}

If $g$ is a positive integer, we denote by $\nu:\GSp_{2g}(\mathbb F_{\ell})\rightarrow \mathbb F_{\ell}^\times$ the multiplier map. The kernel of $\nu$  is $\Sp_{2g}(\mathbb F_{\ell})$.

\begin{lem}\label{Lemma-Lombardo}
Let $A=\prod_{i=1}^n A_i$, where $A_1,\ldots, A_n$ are abelian varieties defined over $K$, fully of type $\GSp$ and in pairs not $\bar{K}$-isogenous. For every $\ell\gg 0$ we have
$$\Gal(K(A[\ell])/K)=\{(\sigma_1,\ldots, \sigma_n)\in \prod_{i=1}^n \GSp_{2\dim(A_i)}(\mathbb F_{\ell}) \,\mid\, \nu(\sigma_i)=\nu(\sigma_{i'})\; \forall i,i'=1,\ldots,n \}$$
so in particular we have
$\Gal(K(A[\ell])/K(\mu_{\ell}))=\prod_{i=1}^n \Sp_{2\dim(A_i)}(\mathbb F_{\ell})$
and this group does not have any non-trivial abelian quotients.
\end{lem}

\begin{proof}
We write $G_{\ell}:=\Gal(K(A[\ell])/K)$ and $H_{\ell}:=\Gal(K(A[\ell])/K(\mu_{\ell}))$. By assumption we can identify $\Gal(K(A_i[\ell])/K)$ with $\GSp_{2\dim(A_i)}(\mathbb F_{\ell})$ and $\Gal(K(A_i[\ell])/K(\mu_{\ell}))$ with $\Sp_{2\dim(A_i)}(\mathbb F_{\ell})$ for every $\ell\gg 0$.

Let $\sigma\in G_{\ell}$ and for $i=1,\ldots, n$ denote by $\sigma_i$ the restriction of $\sigma$ to $K(A_i[\ell])$. Since the restriction of $\sigma_i$ to $K(\mu_{\ell})$ is independent of $i$ and is determined by the multiplier $\nu(\sigma_i)$, we deduce that the condition $\nu(\sigma_i)=\nu(\sigma_{i'})$ for every $i,i'=1,\ldots, n$ must hold. 
We have thus shown that $G_{\ell}$ is contained in the set as in the statement.

For every $\ell\gg 0$ the cyclotomic character $\chi_{\ell}:G_K\rightarrow \mathbb F_{\ell}^\times$ is surjective: since automorphisms of $K(\mu_\ell)$ can be extended to $K(A[\ell])$ we deduce that $\nu(\sigma_i)$ takes all values in $\mathbb F_{\ell}^\times$ by varying $\sigma$.
Thus we are left to show that 
$$H_{\ell}= \prod_{i=1}^n \Sp_{2\dim(A_i)}(\mathbb F_{\ell})$$
holds for every $\ell\gg 0$. By assumption the Hodge group of $A_i$ equals $\Sp_{2\cdot \dim A_i}$ and the strong Mumford Tate conjecture \cite[conj. 1.2]{HRGSp} holds for $A_i$. Then by \cite[thm. 1.4]{HRGSp} the Hodge group of $A$ is $\prod_i \Sp_{2\dim(A_i)}$ and the strong Mumford Tate conjecture holds for $A$. Consequently the index of 
$H_{\ell}$ inside $\prod_i \Sp_{2\dim(A_i)}(\mathbb F_{\ell})$ is bounded by a constant that is independent of $\ell$. For $\ell\gg 0$ the index must be $1$ because the index $m$ of a proper subgroup of $\Sp_{2g}(\mathbb F_{\ell})$ satisfies $m!\geq \frac{1}{2}\cdot \#\Sp_{2g}(\mathbb F_{\ell})\geq \ell$, see for example \cite[lemma 2.5 and 2.13]{HRGSp}.

For the last assertion it suffices to consider the projections of some abelian quotient of $H_{\ell}$: these are trivial because for $\ell\gg 0$ the group $\Sp_{2g}(\mathbb F_{\ell})$ has no non-trivial abelian quotients.
\end{proof}

We will use the following application of the above theorem:

\begin{lem}\label{magic}
Let $A_1,\ldots, A_n, A'_1, \ldots, A'_m$ be admissible abelian varieties defined over a number field $K$, in pairs not $\bar{K}$-isogenous. Then for every prime number $\ell\gg 0$ we may find $\sigma\in \Gal(\bar{K}/K)$ such that $\sigma$ acts as the identity on $A_i[\ell]$ for every $i=1,\ldots, n$ and does not fix any point in $A'_i[\ell]\setminus \{0\}$ for every $i=1,\ldots, m$. 
\end{lem}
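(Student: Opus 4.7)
The plan is to apply Theorem~\ref{propHR} to the full list $A_1,\ldots,A_n,A'_1,\ldots,A'_m$ and then choose the desired $\sigma$ component by component in the resulting product decomposition.

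By Theorem~\ref{propHR} there exists an integer $c>0$ such that for every prime $\ell$ the fields $K(A_i[\ell])$ (for $i=1,\ldots,n$) and $K(A'_j[\ell])$ (for $j=1,\ldots,m$) are linearly disjoint over some Galois extension $F_\ell$ of $K(\mu_\ell)$ with $[F_\ell:K(\mu_\ell)]$ dividing $c$. Denoting their compositum by $M_\ell$, linear disjointness yields
$$
\Gal(M_\ell/F_\ell)\;\cong\;\prod_{i=1}^n \Gal(K(A_i[\ell])/F_\ell)\;\times\;\prod_{j=1}^m \Gal(K(A'_j[\ell])/F_\ell).
$$
I would choose $\sigma\in\Gal(M_\ell/F_\ell)$ whose projection to each $A'_j$-factor is the identity and whose projection to each $A_i$-factor has no eigenvalue $1$ on $A_i[\ell]$; lifting $\sigma$ to $\Gal(\bar{K}/K)$ yields the element required by the lemma.

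It remains to produce, for $\ell$ large enough, an element of $\Gal(K(A_i[\ell])/F_\ell)$ acting on $A_i[\ell]$ without eigenvalue $1$. Since $F_\ell\supseteq K(\mu_\ell)$, this group is a subgroup of index dividing $c$ in $H_{A_i,\ell}$. If $A_i$ is fully of type $\GSp$ or is a non-CM elliptic curve, then for $\ell\gg 0$ we have $H_{A_i,\ell}=\Sp_{2g_i}(\F_\ell)$ or $\SL_2(\F_\ell)$, respectively, and the simplicity of $\PSp_{2g_i}(\F_\ell)$ (resp.\ $\PSL_2(\F_\ell)$), together with the fact that its order dwarfs $c!$, forces the index-$\leq c$ subgroup to coincide with the full group; this group contains $-\Id$, which has no eigenvalue $1$ for odd $\ell$. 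For CM elliptic curves I first enlarge $K$ to a field over which every $\mathcal{E}_i,\mathcal{E}'_j$ is defined (this preserves the hypothesis of being pairwise non-$\bar K$-isogenous), so that $H_{A_i,\ell}$ becomes the norm-$1$ subgroup of a Cartan of $\GL_2(\F_\ell)$, a cyclic group of order $\ell\pm 1$ each of whose non-identity elements has no eigenvalue $1$; its index-$\leq c$ subgroup then has order at least $(\ell\pm 1)/c$, hence is nontrivial for $\ell\gg 0$.

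The main obstacle is the bounded-index constraint coming from Theorem~\ref{propHR}: one must verify that passing from $H_{A_i,\ell}$ to $\Gal(K(A_i[\ell])/F_\ell)$ does not eliminate the supply of fixed-point-free elements on $A_i[\ell]$. This is dealt with by classical simplicity and order estimates in the $\Sp$ and $\SL_2$ cases, and by the explicit cyclic description of the norm-$1$ Cartan subgroup (after the preliminary base change) in the CM case.
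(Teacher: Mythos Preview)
Your argument is correct and follows the paper's strategy: use Theorem~\ref{propHR} to obtain a product decomposition of $\Gal(M_\ell/F_\ell)$, then choose $\sigma$ factor by factor, the only issue being to exhibit a fixed-point-free element inside a bounded-index subgroup of $H_{A_i,\ell}$. The paper's treatment of this last step differs in detail: it uses that the subgroup is \emph{normal} (since $F_\ell/K(\mu_\ell)$ is Galois) and, in the $\GSp$ case, works with the torus elements $\mathrm{diag}(\lambda I_g,\lambda^{-1}I_g)$ rather than invoking the simplicity of $\mathrm{PSp}_{2g_i}(\F_\ell)$ to force the subgroup to be everything; both routes are valid. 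One organisational point: the base change adjoining the CM fields should be made at the outset, before applying Theorem~\ref{propHR} (as the paper does), since the constant $c$ and the field $F_\ell$ depend on the ground field.
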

\begin{proof} We may suppose for elliptic curves with CM that this is defined over $K$ because if the requested property holds over a finite Galois extension of $K$ then it also holds over $K$.
Let $c$ be as in theorem~\ref{propHR} for the varieties $A_1,\ldots, A_n, A'_1, \ldots, A'_m$. Without loss of generality it suffices to show that the following holds for every prime number $\ell\gg0$: any normal subgroup of index dividing $c$ of the Galois group of $K(A_1[\ell])/K(\mu_\ell)$ contains an automorphism that does not fix any point in $A_1[\ell]\setminus \{ 0\}$. 
If $A_1$ is an elliptic curve that has  $\CM$ over $K$ and $\ell\gg0$ then all elements of $K(A_1[\ell])/K(\mu_\ell)$ correspond to semi simple matrices of determinant $1$ thus every such matrix that is not the identity does not fix any point in $A_1[\ell]\setminus \{ 0\}$. Now suppose that $A_1$ is fully of type $\GSp$, and let $g=\dim A_1$. Consider the diagonal matrices of the form 
$$\begin{pmatrix}
\lambda \Id_g & \\
& \lambda^{-1} \Id_g\\
\end{pmatrix}$$
where $\lambda$ is in the multiplicative group $\mathbb F_{\ell}^\times$ and $\lambda^{-1}$ is the inverse of $\lambda$. These matrices belong to $\GSp_{2g}(\mathbb F_{\ell})$ and have multiplier $1$ hence they are in the Galois group of $K(A_1[\ell])/K(\mu_\ell)$, see also~\cite[lem.~2.2]{Ratazzi}. By taking $\ell$ sufficiently large we have $\ell-1>2c$ so any normal subgroup of index dividing $c$ of this Galois group contains a matrix of the above type with $\lambda\neq 1$ hence not fixing any point in $A_1[\ell]\setminus \{ 0\}$.
\end{proof}

\section{Proof of the theorems}

\begin{proof}[Proof of theorem~\ref{mainhorizontal}]
We first exclude the possibility that one of the two abelian varieties is an elliptic curve with CM and the other is fully of type $\GSp$. Since these two abelian varieties are not $\bar{K}$-isogenous then the assumption on the torsion fields does not hold by theorem~\ref{propHR}. We may now assume that $A,A'$ are both elliptic curves or are both fully of type $\GSp$.

For two elliptic curves, we first reduce to the case where the $\CM$ is defined over $K$. Indeed, if  $L:=K\mathcal E\mathcal E'$ then we have $LK_{\ell}\subseteq LK'_{\ell}$ for every $\ell\in\L$ so the assumptions of the theorem also hold over $L$. We may then apply the theorem over $L$ and use lemma~\ref{lem4} to show that since $A$ and $A'$ are $L$-isogenous then they are also $K$-isogenous.

We now prove that $A$ and $A'$ are  $\bar{K}$-isogenous. For elliptic curves we have: by \cite[thm.~3.5 and prop.~2.8]{FreyJarden} (applied with $E_1=A'$ and $E_2=A$ and $c=1$) then either $A,A'$  both have $\CM$ or they both do not have $\CM$ and moreover the two elliptic curves are $\bar{K}$-isogenous. If $A$ and $A'$ are fully of type $\GSp$ then the assumptions of \cite[thm~1.5]{Ratazzi} are satisfied (setting $c=1$) hence we deduce that $A$ and $A'$ are $\bar{K}$-isogenous.

We conclude the proof by showing that any $\bar{K}$-isogeny is defined over $K$.
Let $f:A\rightarrow A'$ be a $\bar{K}$-isogeny of degree $d$ defined over some finite Galois extension $F$ of $K$. Let $\sigma$ be in $\Gal(F/K)$. We want to prove $f-^\sigma\!\! f=0$ and we accomplish this by showing that the kernel of $f-^\sigma\!\! f$ contains $A[\ell]$ for infinitely many prime numbers $\ell$. Indeed, if $\ell\gg 0$ and if 
 $K_{\ell}\subseteq K'_{\ell}$ then we have 
$$F\cap K_{\ell}K'_{\ell}=F\cap K'_{\ell}=K$$
by applying to $A'$ proposition~\ref{prop:serre:indep}. In particular, we may extend $\sigma$ to  $FK_{\ell}K'_{\ell}$ and suppose that $\sigma$ acts as the identity on $K_{\ell}K'_{\ell}$. Then for every $R\in A[\ell]$ we have $^\sigma\! R=R$ and $^\sigma\! (f(R))=f(R)\in A'[\ell]$. So we have
$$^\sigma\! f(R)=^\sigma\!\! f(^\sigma\! R)=^\sigma\!\!(f(R))=f(R)$$
hence $(f-^\sigma\!\! f)(R)=f(R)-^\sigma\! f(R)=0$ for every $R\in A[\ell]$.
\end{proof}

\begin{proof}[Proof of theorem~\ref{HPR}]
For two elliptic curves, we first reduce to the case where the $\CM$ is defined over $K$.  Consider the field $L:=K\mathcal E\mathcal E'$. For a density-one set of primes $\q$ of $L$ we have: $\q$ is of good reduction for $A$ and $A'$; the prime $\p:=\q\cap K$ is in $S$; $\q$ has degree one hence $k_{\q}=k_{\p}$. We deduce that the assumptions of the theorem hold for $L$ if they hold for $K$. Then it suffices to apply lemma~\ref{lem4} to conclude.

By theorem~\ref{mainhorizontal}, it suffices to show that for all prime numbers $\ell\gg 0$ as in the statement we have $K_\ell\subseteq K'_\ell$. The proof goes as in~\cite[lem.~5]{HallPerucca} and~\cite[sec.~5.1]{Ratazzi}: we apply lemma~\ref{lem1} and under the  assumption $\Phi_\ell\leq \Phi_\ell'$ we get $K_\ell\subseteq  K'_\ell$. 
\end{proof}

\begin{proof}[Proof of theorem~\ref{new}]
Both conditions also hold over a finite extension of $K$ because every number field has a density-one set of primes of degree one (the corresponding residue fields are unchanged). Since we are only interested in a $\bar{K}$-isogeny we may then replace $K$ by a finite Galois extension and assume that all homomorphisms are defined over $K$. In particular, the simple factors of the Poincar\'e Reducibility Theorem decomposition of $A$ and $A'$ are geometrically simple and every geometrically simple $\bar{K}$-quotient of $A$ (respectively, of $A'$)  is $\bar{K}$-isogenous to a factor of $A$ (respectively, of $A'$).
The assumptions are also invariant under a ${K}$-isogeny so we may suppose that the factors of $A$ and $A'$ are in pairs either equal or not $\bar{K}$-isogenous. 

\textit{Proof of 1.} We first reduce to the case where $A$ and $A'$ have no common factor. Let $B$ be a common factor of $A$ and $A'$. If $A/B=A'/B=0$ then $A=A'=B$ and the statement is proven. If without loss of generality $A/B=0$ and $A'/B\neq 0$ then we find a contradiction. Indeed, there is a positive density of primes $\p$ splitting completely in the field $K(A'/B[2])$ and in particular such that $\#A'/B(k_\p)$ is even. Since $S$ is a set of  density-one, there are primes as such in $S$ and they satisfy 
$$\#(A/B)(k_\p)=1 \quad\textrm{and}\quad \#(A'/B)(k_\p)\neq 1 \quad \quad\textrm{hence}\quad \quad \#A(k_\p)\neq \#A'(k_\p)$$
against the assumptions. Now suppose that $A/B$ and $A'/B$ are both non-zero. Then these varieties again satisfy the assumptions in the statement. Moreover, having a $\bar{K}$-isogeny between $A/B$ and $A'/B$ implies that $A$ and $A'$ are $\bar{K}$-isogenous. We may then iterate the above process and reduce to the case where the given abelian varieties have no common factor.

Let $A_1,\ldots, A_n$ be the different factors of $A$ and let $A'_1, \ldots, A'_m$ be the different factors of $A'$. By lemma~\ref{magic} we can find a prime number $\ell$ and $\sigma\in \Gal(\bar{K}/K)$ such that $\sigma$ acts as the identity on $A_i[\ell]$ for every $i=1,\ldots, n$ and does not fix any point in $A'_j[\ell]\setminus \{0\}$ for every $j=1,\ldots, m$. By applying the Cebotarev Density Theorem with respect to the compositum of the extensions $K(A_i[\ell])$ and $K(A'_j[\ell])$
for every $i,j$ we find a positive density of primes $\p$ of $K$ such that $\ell\mid \#A(k_\p)$ and $\ell \nmid \#A'(k_\p)$, contradicting the assumptions.

\textit{Proof of 2.} We may suppose that $A$ (respectively $A'$) does not have repeated factors because neither the assumptions nor the conclusions would be affected. 
We have already reduced to the case where every geometrically simple $\bar{K}$-quotient of $A$ (respectively, of $A'$) is $\bar{K}$-isogenous to a factor of $A$ (respectively, of $A'$), and where the factors of $A$ and $A'$ are in pairs either equal or not $\bar{K}$-isogenous. Then it suffices to prove that every factor of $A$ is also a factor of $A'$. Let $A'_1, \ldots, A'_m$  with $m\geq 1$ be the different factors of $A'$ and suppose that $A_1$ is a factor of $A$ which is not one of $A'_1,\ldots, A'_m$. Analogously to the proof of the first assertion, we may apply lemma~\ref{magic} to find a prime number $\ell$ satisfying the condition in the statement and a positive density of primes $\p$ of $K$ such that $\ell\mid \#A(k_\p)$ and $\ell \nmid \#A'(k_\p)$, contradiction.
\end{proof}

\end{document}